\numberwithin{equation}{section}
\theoremstyle{plain}
\newtheorem{theorem}{Theorem}
\theoremstyle{definition}
\newtheorem{lemma}{Lemma}
\newtheorem{remark}{Remark}
\newcommand{\bin}{\{0,1\}^{\mathbb{N}}}
\titleformat{\section}{\small\centering \rm}{\thesection}{1 em}{}
\titleformat{\subsection}{\normalsize \it}{\thesubsection}{1 em}{}
\begin{document}
\title{\rm {\normalsize{\bf{A NOTE ON DENSITY ESTIMATION FOR BINARY SEQUENCES}}}}
\author{\small{KARTHIK BHARATH} \thanks{Department of Statistics, Ohio State University, email: bharath.4@osu.edu.}}
\date{}
\maketitle
\begin{abstract}
A histogram estimate of the Radon-Nikodym derivative of a probability measure with respect to a dominating measure is developed for binary sequences in $\bin$. A necessary and sufficient condition for the consistency of the estimate in the mean-square sense is given. It is noted that the product topology on $\bin$ and the corresponding dominating product measure pose considerable restrictions on the rate of sampling required for the requisite convergence.  \\
\\
\footnotesize{KEYWORDS}: Infinite binary sequences; Cantor space; Radon-Nikodym derivative; Histogram. 
\end{abstract}
\section{INTRODUCTION}
We investigate the estimation of the notion of a density, when it exists with respect to a suitable dominating measure, of sequences taking values in $\{0,1\}$ based on an independent sample. Analogous to the intuitive first step towards estimating densities on Euclidean spaces, we construct a histogram estimate and examine its consistency in the mean-square sense. An important aspect of the approach is in the detailed development of several of the ingredients employed in the construction of a histogram on the space $\bin$: the space of infinite binary sequences. In this regard, some interesting challenges surface pertaining to the use of the product topology and product measures on $\bin$; the definition of a suitable Radon-Nikodym derivative and the rate of convergence of the histogram estimator are inextricably linked to the choice of the topology on $\bin$ and the corresponding Borel sets. As it turns out, despite $\bin$ begin a Polish space (with respect to a chosen metric), the usual sufficient condition for consistency of the histogram of the form, $N$ times the volume (or diameter) of the element of a partition of $\bin$ containing the observation of interest converging to infinity, does not work on $\bin$---here $N$ is the size of the sample. The use of a product measure as the dominating measure brings about some complications for the requisite convergence. Our task then is two-fold: construct an appropriate topology on $\bin$ which would generate the type of Borel sets which would engender the definition of a Radon-Nikodym derivative of a probability measure with respect to a chosen dominating measure, meaningful; then, based on the chosen dominating measure, define a suitable histogram estimate which would converge with respect to the mean-square criterion to the true density. 

Several statistical applications involve observations which are binary sequences or are concerned with observations which are coded as binary sequences. However, there appears to be a paucity of work on estimating densities of binary vectors with differing lengths; this is a fairly common scenario encountered in various applications. While it is indeed the case that in practice one typically observes only finite binary sequences, we focus our attention on the more primitive notion of infinite binary sequences. We feel that this is a step in the right direction when dealing with finite binary sequences of differing lengths and are arbitrarily long. To that end, the appropriate space for the binary sequences in question is the Cantor space $\{0,1\}^{\mathbb{N}}$. Properties of the Cantor space have been extensively investigated in work pertaining to algorithmic randomness and dynamical systems---the relevant literature forms an exhaustive list. Fortunately, the books by \cite{DH} and \cite{PS}, and the Ph.D. dissertation of \cite{JR} offer a comprehensive overview of the subjects and contain the necessary bibliographical references;  we direct the interested reader to their works. While, to our knowledge, the problem of estimating densities on $\bin$ has not been explored hitherto, problems of density estimation in separable metric spaces and histogram estimation of Radon-Nikodym derivatives have received attention; see \cite{SDN} and \cite{Oli} in this regard. Indeed, the space$\bin$ is homeomorphic to the Cantor middle thirds set and as will be discussed in the sequel, is a Polish space. However, the unusual structure of the space (it is 0 dimensional with no isolated points) poses additional complications which pose a challenge in employing results from \cite{SDN} and \cite{Oli}. In the subsequent sections, we develop the requisite topology on the space $\bin$ and define a histogram estimator for an appropriate notion of a density, when it exists; we then examine its asymptotic properties. In the final section, we comment on some potential applications of the proposed estimator, highlight its salient features and note some of its shortcomings. 

\section{SETUP AND SOME ISSUES}
Suppose $x_i=(x_{i1},x_{i2},\ldots)$, $i=1,\ldots,N$, are independent sequences taking values in $\bin$ defined on a probability space with probability measure $\mu$. The goal of this article is to, using $x_i$, construct a consistent (in the mean-square sense) histogram estimate of the Radon-Nikodym derivative $\frac{d\mu}{d\lambda}$, whenever it exists, where $\lambda$ is a positive $\sigma$-finite Borel dominating measure and $\mu$ is a positive finite Borel measure absolutely continuous with respect to $\lambda$. We assume throughout that $\mu$ and $\lambda$ are continuous measures; this, as will be elucidated in the sequel, is not just a vacuous proviso but is of considerable importance while showing consistency of the histogram estimate. 

It is well known that elements of $\bin$ can be studied as binary expansions of real numbers from $[0,1]$ via the mapping 
$$\tau \mapsto \displaystyle \sum_{i=1}^\infty \frac{\tau(i)}{2^{i}},$$
where $\tau \in \bin$ and $\tau(i)$ denotes the $i^{\text{th}}$ element of the sequence $\tau$; the map is surjective but not injective. In relation to the expansion, it is also known (see Example 31.1 p.407 of \cite{bill}) that a binary sequence when considered as a manifestation of i.i.d. Bernoulli random variables $\tau(i)$, produces a distribution function which is singular with respect to the Lebesgue measure. If one considers the infinite product measure determined by coin tossing with success probability 1/2, then the random variable $\tau$ has density 1 with respect to the Lebesgue measure of the dyadic interval (length) for all values in $\bin$; if the success probability is different from $1/2$, then the density is 0 rendering singularity. This poses a considerable obstacle in our efforts towards defining the Radon-Nikodym derivative---we are coerced to choose an alternative dominating measure. Fortunately, one is able to consider measures which are topologically equivalent to the coin-tossing measure on $\bin$, i.e. measures which are homeomorphic to the Lebesgue measure on $\bin$. To this end, we state a result (Theorem 3.67) from \cite{nishiura} which shows the existence of a family of bounded Radon-Nikodym derivatives on $\bin$ manufactured via suitable homeomorphisms. 
\begin{theorem}\label{BRD}
Suppose that $\mu$ is a positive, continuous, finite Borel measure on $\bin$ and suppose that $\nu$ is a finite, continuous Borel measure on $\bin$. Then there is a positive real number $c$ and there is a homeomorphism $h: \bin \to \bin$ such that $\nu\ll h \mu$ and $0 \leq \frac{d\nu}{d(h \mu)} \leq c$.
\end{theorem}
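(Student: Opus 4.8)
The plan is to read ``$h\mu$'' as the image measure $A\mapsto\mu(h^{-1}(A))$ and to observe that the conclusion ``$\nu\ll h\mu$ with $0\le d\nu/d(h\mu)\le c$'' is equivalent to the single inequality $\nu(A)\le c\,\mu(h^{-1}(A))$ for every Borel set $A$. So I would construct a homeomorphism $h:\bin\to\bin$ and a constant $c$ --- thinking of the source copy of $\bin$ as carrying $\mu$ and the target copy as carrying $\nu$ --- such that $\nu(Q)\le c\,\mu(h^{-1}(Q))$ for every $Q$ in a countable algebra of clopen sets generating the Borel $\sigma$-field, and then promote the inequality to all Borel sets. (Throughout I read ``positive measure'' as ``positive on every nonempty open set'': this is what the theorem needs, since a homeomorphism preserves supports, so the image of a $\mu$ that is not of full support cannot dominate a full-support $\nu$.) The homeomorphism itself would be produced by the standard back-and-forth scheme on refining finite clopen partitions that underlies the structure theory of the Cantor space.

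First I would fix $c>\nu(\bin)/\mu(\bin)$ and build, by induction on $n$, finite clopen partitions $\mathcal P_0\prec\mathcal P_1\prec\cdots$ of the source and $\mathcal Q_0\prec\mathcal Q_1\prec\cdots$ of the target, with refinement-respecting bijections $\phi_n:\mathcal P_n\to\mathcal Q_n$, maintaining two invariants: (i) $\mathrm{mesh}(\mathcal P_n)\to0$ and $\mathrm{mesh}(\mathcal Q_n)\to0$; and (ii) $\nu(\phi_n(P))<c\,\mu(P)$ for every cell $P\in\mathcal P_n$. I would start from $\mathcal P_0=\mathcal Q_0=\{\bin\}$, where (ii) holds by the choice of $c$, and at each step refine only one of the two partitions, alternating with $n$. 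On a ``source'' step I would cut every $P\in\mathcal P_n$ by deep enough cylinders into nonempty clopen cells of small diameter, and then subdivide $Q:=\phi_n(P)$ into the same number of nonempty clopen cells whose $\nu$-masses fall below the caps $c\,\mu(\cdot)$ attached to the new source cells; since those caps sum to $c\,\mu(P)$, which by (ii) exceeds $\nu(Q)$ with room to spare, a greedy packing of cylinders of $Q$ of tiny $\nu$-mass into bins with these caps (deliberately leaving a fixed fraction of the slack unused) produces such a subdivision --- this is where I use that $\nu$ is non-atomic, so that the cylinders can be made arbitrarily light. A ``target'' step is the mirror image: cut every $Q\in\mathcal Q_n$ into small-diameter clopen cells and split the matched $P=\phi_n^{-1}(Q)$ into the same number of nonempty clopen cells with $\mu$-masses exceeding $\nu(\cdot)/c$, possible because $\nu(Q)/c<\mu(P)$, again by a greedy cylinder packing --- this time using that $\mu$ is non-atomic (deep cylinders are light) and positive (every nonempty clopen cell has positive $\mu$-mass). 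Since subdividing a partition only shrinks cells, once a source step has driven $\mathrm{mesh}(\mathcal P_n)$ below a given level it stays below it; alternating the two kinds of steps thus secures (i), while (ii) is preserved --- strictly --- at every step.

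Then I would pass to the limit: for $x$ in the source let $P^{(n)}(x)$ be its cell in $\mathcal P_n$; by (i) the nested clopen sets $\phi_n(P^{(n)}(x))$ shrink to a single point $h(x)$. The usual arguments (compactness for well-definedness, $\mathrm{mesh}(\mathcal Q_n)\to0$ for injectivity and surjectivity) show $h$ is a continuous bijection of $\bin$, hence --- $\bin$ being compact Hausdorff --- a homeomorphism, with $h(P)=\phi_n(P)$ and $h^{-1}(Q)=\phi_n^{-1}(Q)$ on cells; so (ii) says $\nu(Q)\le c\,(h\mu)(Q)$ for every $Q$ in the algebra $\mathcal A$ generated by $\bigcup_n\mathcal Q_n$, and $\mathcal A$ generates the Borel $\sigma$-field because the meshes vanish. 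Finally I would upgrade: $\rho:=c\,(h\mu)-\nu$ is a finite signed Borel measure on the Polish space $\bin$ with $\rho\ge0$ on $\mathcal A$; since every open set is an increasing countable union of members of $\mathcal A$, $\rho\ge0$ on open sets, and then the Hahn decomposition together with the regularity of $\rho$ forces $\rho\ge0$ on all Borel sets, i.e.\ $\nu\le c\,h\mu$. This gives $\nu\ll h\mu$ with $0\le d\nu/d(h\mu)\le c$.

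The hard part --- indeed the only genuinely delicate point --- is that clopen subsets of $\bin$ realize only a restricted set of measure values (the ``clopen value set''), which is exactly why one cannot in general arrange $h\mu=\nu$ and why the matching at a refinement step cannot hit prescribed masses exactly. My way around this, built into the scheme above, is to ask only for strict inequalities carrying a uniformly maintained slack; the greedy packing of cylinders can always realize these because non-atomicity of $\mu$ and of $\nu$ makes deep cylinders arbitrarily light. (Both non-atomicity hypotheses and the positivity of $\mu$ are essentially forced: a homeomorphism preserves atoms, so if $\nu$ had an atom then $c\,h\mu$ could not dominate it, and a non-full-support or atomic $\mu$ cannot be mapped by a homeomorphism to a measure dominating a full-support non-atomic $\nu$.) Everything else --- that the back-and-forth limit is a homeomorphism, and that domination on a generating algebra upgrades to domination on all Borel sets --- is routine.
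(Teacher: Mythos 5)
The paper offers no proof of this statement: it is quoted, with attribution, as Theorem 3.67 of \cite{nishiura}, so there is no internal argument to compare yours against. Judged on its own, your back-and-forth construction is correct, and it is in fact the standard route to homeomorphic-measure results on $\bin$. The reduction of the conclusion to the single inequality $\nu(A)\le c\,\mu(h^{-1}(A))$ for all Borel $A$ is right; the alternating refinement scheme with the strict invariant $\nu(\phi_n(P))<c\,\mu(P)$ is exactly the device that sidesteps the clopen-value obstruction you identify (one cannot hope to arrange $h\mu=\nu$); the packing steps do go through, because continuity of $\nu$ (resp.\ of $\mu$) lets you write any nonempty clopen cell as a finite disjoint union of cylinders of arbitrarily small $\nu$-mass (resp.\ $\mu$-mass), while positivity of $\mu$ keeps every cap $c\,\mu(P_i)$ strictly positive so that each bin can receive at least one cell; and the limiting and upgrading arguments (compactness gives a homeomorphism with $h(P)=\phi_n(P)$ on cells, the inequality passes to the clopen algebra generated by the target cells, then to open sets because the meshes vanish, then to all Borel sets by regularity of the finite Borel measures involved) are routine and correct. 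Your reading of ``positive'' as ``positive on every nonempty open set'' agrees with the usage in \cite{nishiura} and is indeed what the argument needs. One small inaccuracy in a side remark: continuity of $\mu$ is \emph{not} forced by the conclusion---an atom of $h\mu$ does not obstruct $\nu\le c\,h\mu$ (add a point mass to a full-support continuous $\mu$ and take $h$ the identity); only full support of $\mu$ and non-atomicity of $\nu$ are necessary. This does not affect your proof, since continuity of $\mu$ is hypothesized and you invoke it only where it is legitimately needed, namely to make the source-side cylinders $\mu$-light in the target steps.
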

Topologically equivalent measures on $\bin$ via the determination of the suitable homeomorphism $h$, has been an area of considerable interest. In this regard, for examples of suitable candidates for $h$, we refer to a couple of articles here from an exhaustive list: \cite{BD1} and \cite{BD2}. We do not delve any deeper regarding singularity issues and existence of bounded Radon-Nikodym derivatives on $\bin$; based on Theorem \ref{BRD}, we assume its existence and wish to estimate it via a histogram. 

Another issue is regarding the choice of the dominating measure. It is worthwhile to note that, for measure-theoretic purposes, the space $\bin$ is isomorphic to the unit-interval and once can therefore equip it with a suitable Lebesgue measure---this measure is usually referred to as a coin-tossing measure. If one constructs the product topology on $\bin$, then there is a unique measure (upto homeomorphism) associated with the measurable space. For instance, if we consider the Borel sets generated by $\{x: x  \text{  is the extension of  } \tau \}$ for some finite sequence $\tau$ of length $k$ and $x \in \bin$, then the Lebesgue measure of such a set would be $2^{-k}$. Indeed, the Lebesgue measure represents a special case of a general product measure. This poses an issue for convergence since it is usually the case that the partition for the construction of the histogram in sample-size based which is of linear order. We are hence left with no choice but to impose restrictions on the sample size and its rate of increase. For reasons offered by the preceding discussion and for the construction of the histogram, which will be elucidated shortly, \emph{we assume that the sample size $N$ is equal to $2^k$ for some positive integer $k$}; if there is no confusion, we shall use $N$ throughout to denote the sample size with the understanding that $N=2^k$.
\section{PRELIMINARIES ON $\{0,1\}^{\mathbb{N}}$}
\subsection{Open sets}
The basic open sets or \emph{cylinders} of $\bin$ are denoted as $[\tau]$, where $\tau$ is a finite binary sequence and $[\tau]$ denotes the set of all points of $\bin$ which are extensions of $\tau$. Observe that if length of $\tau_1$ is lesser than the length of $\tau_2$, denoted by $\tau_1 \leq \tau_2$, then $[\tau_2] \subseteq [\tau_1]$; indeed, if $\tau_2$ is an initial segment of a binary sequences in $\bin$, then so will be $\tau_1$. In other words, finite binary sequences or more accurately, finite binary sequences which are initial segments of infinite ones, induce a topology on $\bin$. The cylinders generate the  Borel $\sigma$-algebra $\mathcal{B}\left(\bin\right)$ on $\bin$(see \cite{DH}). 
\subsection{Measures on $\bin$}
Denote by $\{0,1\}^{<\mathbb{N}}$, the space of all finite binary sequences and denote by $\tau0$ the concatenation of the finite binary sequence $\tau$ and $0$; $\tau1$ is defined along similar lines. Measures on $\bin$ are induced by normalized, monotone, countably additive set functions $\rho: \{0,1\}^{<\mathbb{N}} \to [0,1]$ satisfying
\begin{equation}\label{pm}
\rho(\tau)=\rho(\tau0)+\rho(\tau1) ,
\end{equation}
where $\tau \in  \{0,1\}^{<\mathbb{N}}$. Then by setting $\lambda([\tau])=\rho(\tau)$ one obtains a measure $\lambda$ on $\bin$ (see \cite{DH} for details). The Lebesgue measure on $\bin$ (see, for instance, p.5 of \cite{JR}) assigns to every cylinder its `geometrical size'. Analogous to the case of the unit interval, the Lebesgue measure of any cylinder $[\tau]$ is $2^{-|\tau|}$, where  $\tau$ is the finite binary sequence which generates the cylinder $[\tau]$ and $|\tau|$ denotes the length of $\tau$. Clearly, the Lebesgue measure satisfies the requirement in (\ref{pm}). Therefore, condition (\ref{pm}) places a restriction on the nature of the dominating measure $\lambda$ that can be used in the definition of the histogram estimate. One can consider, as a candidate for $\lambda$, the generalized Bernoulli measure by considering a sequence $p_1,p_2,\ldots,$ such that $0 \leq p_i \leq 1$ for all $i$ and noting that this sequence induces a measure on $\bin$ in the following manner: for each $i \geq 1$, let $\lambda_i(1)=p_i$ and $\lambda_i(0)=1-p_i$ and
\begin{equation}\label{GB}
\lambda([\tau] )=\prod _{i=1}^n \lambda_i(\tau(i)),
\end{equation}
where $\tau$ is a binary sequence of length $n$. If $p_i=1/2$ for each $i$ then the resulting measure is the usual Lebesgue measure. It is easy to verify that the generalized Bernoulli measure satisfies condition (\ref{pm}).

Another choice for $\lambda$ which describes the geometric size of a cylinder in $\bin$ is the Hausdorff measure on $\bin$ (see \cite{JR}). The Hausdorff measure on $\bin$ is an outer measure which generalizes the Lebesgue measure. We do not discuss the details regarding Hausdorff measures on the Cantor space and direct the reader to \cite{JR} for a comprehensive examination of its properties. 
\subsection{Metrics and Partition}
Two metrics which are compatible with the cylinders defined are
\begin{equation}\label{metric1}
d_1(\tau_1,\tau_2)=\frac{1}{N(\tau_1,\tau_2)} 
\end{equation}
and 
\begin{equation}\label{metric2}
d_2(\tau_1,\tau_2)=\frac{1}{2^{N(\tau_1,\tau_2)}} ,
\end{equation}
where $ N(\tau_1,\tau_2)=\text{min}\{k: \tau_1(k) \neq \tau_2(k)\}$.
Under the two metrics and the topology defined above, $\bin$ is a compact metric space and hence separable.  
Using either $d_1$ or $d_2$ we can define the diameter of a set $X \subseteq \bin$ by
$$ d(X)=\sup\{d_i(A,B): A, B \in X \} \quad i=1,2.$$
It is easy to see that under $d_2$, $d([\tau])=\frac{1}{2^{|\tau|}}$ and under $d_1$, $d([\tau])=\frac{1}{|\tau|}$ where $\tau$ is a finite binary sequence which induces the cylinder $[\tau]$. The Lebesgue measure of any cylinder of $\bin$ is equal to its diameter $d([\tau])$ under the metric $d_2$. Indeed, one might be tempted to retain the diameter of the cylinder, under suitable metrics, as the measure itself. But the pitfall with this approach, as exemplified by the metric $d_1$, is that the measure would then fail to satisfy the condition in (\ref{pm}).

The construction of a suitable partition is an important aspect of a histogram estimator. We first note the imperative characteristics of any partition used in the definition of a histogram estimator when consistency is of chief interest. For a collection $\mathcal{C}$ of non-empty subsets of $\bin$, define the mesh of $\mathcal{C}$ by
\[ \text{mesh}(\mathcal{C})=\sup_{X \in \mathcal{C}}d(X)\]
where the diameter $d$ can be defined with respect to either $d_1$ or $d_2$. Along the lines of the approach in \cite{Oli} we consider a sequence of partitions $\Pi_j$, $j \in \mathbb{N}$, of $\bin$ satisfying the following two conditions:
\begin{description}
\item [$A1$.] for each $j \in \mathbb{N}$, the sets in $\Pi_j$ are Borel measurable;
\item [$A2$.] $\text{mesh}(\Pi_j) \to 0$ with increasing sample size.
\end{description}
Now, keeping in mind that the sample size is $N=2^k$, the following Lemma elucidates the construction of a partition on $\bin$ satisfying $A1$ and $A2$.
\begin{lemma}\label{partition}
Consider the sequence of classes of subsets of $\bin$ defined, for each $k \in \mathbb{N}$, by
\[
\Pi_k=\{[\tau]: \tau \in \{0,1\}^k\},
\]
where $N=2^k$. Under the metrics $d_1$ and $d_2$, the sequence $\Pi_k$, is a sequence of partitions of $\bin$ and satisfies conditions $A1$ and $A2$. 
\end{lemma}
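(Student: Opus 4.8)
The plan is to verify in turn the three assertions contained in the statement: that each $\Pi_k$ is a partition of $\bin$; that condition $A1$ holds; and that the sequence $(\Pi_k)$ satisfies condition $A2$. Since $d_1$ and $d_2$ are both compatible with the cylinder topology, the Borel $\sigma$-algebra $\mathcal{B}(\bin)$ is the same for either metric, so the choice of metric will matter only for $A2$.

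First I would fix $k\in\mathbb{N}$ and check that $\Pi_k=\{[\tau]:\tau\in\{0,1\}^k\}$ is a partition. Each $[\tau]$ is non-empty, since appending an infinite tail of zeros to $\tau$ produces a point of $[\tau]$. The members are pairwise disjoint: if $\tau\neq\tau'$ both have length $k$, they disagree at some index $i\le k$, and no $x\in\bin$ can extend both, so $[\tau]\cap[\tau']=\emptyset$. And $\Pi_k$ covers $\bin$: any $x=(x_1,x_2,\ldots)\in\bin$ extends its own initial segment $(x_1,\ldots,x_k)$, hence lies in exactly one member of $\Pi_k$. (In passing one may note that $\Pi_{k+1}$ refines $\Pi_k$, because $[\tau0]\cup[\tau1]=[\tau]$; this nesting is convenient when the histogram is assembled.) Condition $A1$ is then essentially a restatement of Section 3.1: each $[\tau]$ is a basic open set and the cylinders generate $\mathcal{B}(\bin)$, so every member of the finite collection $\Pi_k$ is Borel measurable.

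For $A2$ I would first read ``with increasing sample size'' as the regime $k\to\infty$, since $N=2^k$. By the diameter identities recorded in Section 3.3, every cylinder $[\tau]$ with $|\tau|=k$ has $d([\tau])=1/k$ under $d_1$ and $d([\tau])=2^{-k}$ under $d_2$; as $\Pi_k$ consists precisely of such cylinders, $\text{mesh}(\Pi_k)$ equals $1/k$ under $d_1$ and $2^{-k}=1/N$ under $d_2$, and both of these tend to $0$ as $k\to\infty$. Hence $A2$ is satisfied in either metric.

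I do not anticipate a genuine obstacle: every step is a direct consequence of the definition of a cylinder and of facts already established in Sections 3.1 and 3.3. The only place that calls for a little care is $A2$, namely translating ``increasing sample size'' into $k\to\infty$ through $N=2^k$, and checking the diameter of a depth-$k$ cylinder, which is controlled by the first coordinate at which two of its points can disagree---but whatever the precise constant, that diameter is of order $1/k$ under $d_1$ and $2^{-k}$ under $d_2$, and in particular vanishes, which is all that $A2$ demands.
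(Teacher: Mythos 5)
Your proof is correct and follows the same (indeed the only natural) route the paper intends: the paper's own proof consists of the single remark that the argument is straightforward once one notes $N\to\infty$ iff $k\to\infty$, and your verification of the partition property, of $A1$ via the cylinders generating $\mathcal{B}(\bin)$, and of $A2$ via the diameter of a depth-$k$ cylinder is exactly the routine detail being elided. Your hedge on the exact diameter is well placed---since points of $[\tau]$ with $|\tau|=k$ first disagree at coordinate $k+1$ or later, the diameter is really $1/(k+1)$ under $d_1$ and $2^{-(k+1)}$ under $d_2$ rather than the paper's stated $1/|\tau|$ and $2^{-|\tau|}$, but this indexing discrepancy is immaterial for $\text{mesh}(\Pi_k)\to 0$.
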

The proof is straightforward upon noting that $N \to \infty$ if and only if $k \to \infty$. 
\subsection{Radon-Nikodym derivative and Continuity}
Continuity of a function $g$ at a point $x \in \bin$ can be defined in the following fashion: A function $g:\bin \to (0,\infty)$  is continuous at $x \in \bin$ if for every $\epsilon>0$, there exists an $s \in \mathbb{N}$ such that if $x$ and $x'$  share an initial segment of length $s$, then $|g(x)-g(x')|<\epsilon$. We now make precise the notion of a Radon-Nikodym derivative of the probability measure $\mu$ on $\bin$ with respect to a dominating measure $\lambda$. Denote by $x \upharpoonright_m$, the initial segment of length $m$ of $x \in \bin$. Now, define
\begin{equation}\label{RN}
f(x)=\lim_{m \to \infty}\frac{\mu\left( [x \upharpoonright_m]\right)}{\lambda\left( [x \upharpoonright_m]\right)},
\end{equation}
when the limit exists and is finite. We refer to $f$ as the density of the probability measure $\mu$ with respect to $\lambda$. We wish to estimate $f$ on the basis of $x_i$, $i=1,\ldots,N$ by a histogram estimate. 

For convergence in mean-square, it is usually required to show the convergence of the bias term to zero. In anticipation of such a scenario, we prove a Lemma which is in similar spirit to a density theorem on $\bin$ which characterizes the binary sequences in $\bin$ on which the convergence holds and can be thought of as an analogue of Lebesgue density points; the Lemma follows along the lines of Lemma 2 in \cite{SDN}.
\begin{lemma}\label{LDT}
Consider the sequence of partitions $\{\Pi_k\}$ of $\bin$ defined in Lemma \ref{partition}; denote the $j^{\text{th}}$ element of $\Pi_k$ by $[\tau]_{k,j}$ where $\tau$ is a finite binary sequence of length $k$. Let $g$ be an integrable function with respect to the measure $\lambda$. If $f$ is continuous at $x$ in $[\tau]_{k,j}$ and if $\text{mesh}(\Pi_k) \to 0$, then as $k \to \infty$,
\[
\frac{1}{\lambda([\tau]_{k,j})}\displaystyle \int _{[\tau]_{k,j}} f(t)d\lambda(t) \to f(x).
\]
\begin{proof}
Observe that
\[
\frac{1}{\lambda([\tau]_{k,j})}\left|\displaystyle \int _{[\tau]_{k,j}} f(t)d\lambda(t) -f(x)\right| \leq 
\frac{1}{\lambda([\tau]_{k,j})}\displaystyle \int _{[\tau]_{k,j}} |f(t)-f(x)|d\lambda(t). 
\]
Since $f$ is assumed to be continuous at $x$, for every $\epsilon >0$, there exists an $s \in \mathbb{N}$, such that
$|f(t)-f(x)|<\epsilon$ whenever $t$ and $x$ share a common initial segment of length $s$. Note then that $\delta_{\epsilon}:=d_2(t,x)=2^{-s}$ (the metric $d_1$ could have been used too; there is no qualitative difference). Now, since $\text{mesh}(\Pi_k)\to 0$, for all $\beta>0$, there exists $k_{\beta}$ such that for all $k \geq k_{\beta}$, $\text{mesh}(\Pi_k)<\beta$. Set $\beta=\delta_{\epsilon}$ and we then have that for all $\epsilon>0$ there exists $k_{\epsilon}$ such that for $k \geq k_{\epsilon}$ and $t \in [\tau]_{k,j}$, we have $d_2(t,x)\leq \text{mesh}(\Pi_k)<\delta_{\epsilon}$. This leads us to claim that $|f(t)-f(x)|< \epsilon$. Consequently, for all $\epsilon>0$, there exists $k_{\epsilon}$ such that for $k \geq k_{\epsilon}$
\[
\frac{1}{\lambda([\tau]_{k,j})}\left|\displaystyle \int _{[\tau]_{k,j}} f(t)d\lambda(t) -f(x)\right| \leq\frac{1}{\lambda([\tau]_{k,j})}\displaystyle \int_{[\tau]_{k,j}}\epsilon d \lambda(t)=\epsilon.
\]
This concludes the proof. 
\end{proof}
\end{lemma}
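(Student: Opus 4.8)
The plan is to bound, for each fixed $k$, the gap between the $\lambda$-average of $f$ over the cell $[\tau]_{k,j}=[x\upharpoonright_k]$ containing $x$ and the value $f(x)$, and then to drive this gap to $0$ using only the fact that these cells shrink, in $d_2$-diameter, to the point $x$ at which $f$ is continuous. First I would note that $f(x)$ is a constant, so $\lambda([\tau]_{k,j})^{-1}\int_{[\tau]_{k,j}} f(x)\,d\lambda=f(x)$, whence by the triangle inequality
\[
\left| \frac{1}{\lambda([\tau]_{k,j})}\int_{[\tau]_{k,j}} f\,d\lambda-f(x) \right| \;\le\; \frac{1}{\lambda([\tau]_{k,j})}\int_{[\tau]_{k,j}} |f(t)-f(x)|\,d\lambda(t).
\]
Here the denominator $\lambda([\tau]_{k,j})$ is positive along the cylinders shrinking to $x$ — this is already implicit in the definition (\ref{RN}) of $f$ — and the integrability hypothesis guarantees the left-hand side is meaningful for every $k$ (for large $k$ it is in any case forced by the estimate below). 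It therefore suffices to make the integrand on the right uniformly small over the cell for all sufficiently large $k$.

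Next I would convert the continuity hypothesis into the language of the metric $d_2$. Fix $\epsilon>0$; continuity of $f$ at $x$ supplies an $s\in\mathbb{N}$ such that $|f(t)-f(x)|<\epsilon$ whenever $t$ and $x$ agree on their first $s$ coordinates, i.e. whenever $d_2(t,x)<2^{-s}=:\delta_\epsilon$. Now I invoke condition $A2$: since $\text{mesh}(\Pi_k)\to 0$, there is a $k_\epsilon$ with $\text{mesh}(\Pi_k)<\delta_\epsilon$ for all $k\ge k_\epsilon$. Because $x$ and every $t\in[\tau]_{k,j}$ extend the common length-$k$ string $\tau$, we have $d_2(t,x)\le d([\tau]_{k,j})\le\text{mesh}(\Pi_k)<\delta_\epsilon$ for every $k\ge k_\epsilon$, and hence $|f(t)-f(x)|<\epsilon$ for all $t\in[\tau]_{k,j}$. (One could run the same argument with $d_1$ in place of $d_2$, redefining $\delta_\epsilon$ accordingly; there is no qualitative change.)

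Substituting this uniform bound into the displayed inequality gives, for all $k\ge k_\epsilon$,
\[
\left| \frac{1}{\lambda([\tau]_{k,j})}\int_{[\tau]_{k,j}} f\,d\lambda-f(x) \right| \;\le\; \frac{1}{\lambda([\tau]_{k,j})}\int_{[\tau]_{k,j}} \epsilon\,d\lambda=\epsilon,
\]
which is precisely the asserted convergence as $k\to\infty$ (equivalently $N\to\infty$). The argument is short, and I do not expect a genuine obstacle: the cylinder structure of $\bin$ makes the purely geometric step — membership of $t$ and $x$ in a common partition cell forcing $d_2(t,x)$ to be small — immediate. The only point that requires attention is the bookkeeping of the three scales in play, namely the target accuracy $\epsilon$, the modulus-of-continuity radius $\delta_\epsilon=2^{-s}$ arising from pointwise continuity, and the mesh $\text{mesh}(\Pi_k)$ supplied by $A2$, arranged so that the mesh is eventually squeezed below $\delta_\epsilon$ and the uniform estimate on the cell takes effect.
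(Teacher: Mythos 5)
Your proposal is correct and follows essentially the same route as the paper's own proof: bound the deviation by the averaged integral of $|f(t)-f(x)|$, convert pointwise continuity at $x$ into a $d_2$-ball of radius $\delta_\epsilon=2^{-s}$, and use $\text{mesh}(\Pi_k)\to 0$ to force every $t$ in the cell containing $x$ into that ball, yielding the uniform bound $\epsilon$. Your phrasing of the continuity step in terms of $d_2(t,x)<\delta_\epsilon$ is in fact slightly cleaner than the paper's, but the argument is the same.
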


\section{HISTOGRAM ESTIMATE AND MAIN RESULT}
We now have the ingredients required to define the histogram estimator of $f$, the Radon-Nikodym derivative of the probability measure $\mu$ with respect to a dominating measure $\lambda$. To formalize ideas, consider the $\sigma$-algebra of Borel sets of $\bin$ to be $\mathcal{B}\left(\bin\right)$ made up of the cylinders of the form $[\tau]$ where $\tau$ is a finite binary sequence. Assume that $\lambda$ is a positive, Borel, $\sigma$-finite measure on $\left(\bin,\mathcal{B}\left(\bin\right)\right)$ such that $0<\lambda(X)<\infty$ for every cylinder $X$. 

Recall that $x_1,\ldots,x_N$ with $N=2^k$ for some positive integer $k$, are independent binary sequences from $\mu$. Consider the sequence of partitions of the compact metric space $\bin$ given by $\Pi_{k}=\{[\tau]_{k,j}\}$ for $j \in \mathbb{N}$. A histogram estimate of a point $x$ on the real line, roughly speaking, is defined as the proportion of points from the sample falling inside an interval of the partition containing $x$ divided by the length of the interval. The trick then is to get finer and finer partitions with increasing sample size. In an analogous fashion, we first define the estimate of distribution function induced by $\mu$ on Borel sets, based on the sample as
\[
F_N([\tau]_{k,j})=\frac{1}{N}\displaystyle \sum_{i=1}^N \mathbb{I}_{x_i \in [\tau]_{k,j}},
\]
where $\mathbb{I}_A$ is the indicator function of the set $A$. A useful interpretation is as follows: the usual empirical cdf corresponding to a sample of size, say $n$, corresponds to a measure assigning equal mass $n^{-1}$ to each observation; this typifies, in a sense, the ignorance regarding the underlying distribution function. Now, in similar spirit, if one is asked to assign mass to sets generated by a finite binary sequence of length $k$, then the appropriate answer, if no further information is accorded, is $2^{-k}$ corresponding to $k$ tosses of a fair coin. This argument provides the rationale for choosing the sample size $N$ to be equal to $2^k$ for some positive integer $k$. 

The {\it histogram estimator} of $f$ defined in (\ref{RN}), when it exists, is given by
\begin{equation}\label{histogram}
f_N(x)=\frac{F_N([\tau]_{k,j})}{\lambda([\tau]_{k,j})} \quad x \in [\tau]_{k,j}, j \in \mathbb{N}\text{ and } N=2^k.
\end{equation}
This estimator integrates to 1 and is based on the ones examined in \cite{SDN}, \cite{Deh} amongst others. 
\begin{remark}
Note that if $f$ is continuous at $x \in [\tau]_{k,j}$, where $[\tau]_{k,j}$ is as defined in definition of the histogram, then 
\begin{align*}
E(f_N(x))&=\frac{1}{\lambda([\tau]_{k,j})}\int_{[\tau]_{k,j}}f(t)d\lambda(t)\\
&\to f(x) 
\end{align*}
as $N \to \infty$ from Lemma \ref{LDT}.
\end{remark}
We now prove that $f_N$ defined in (\ref{histogram}) is a consistent estimator of $f$ with respect to the mean-square criterion. Denote by $E$ and $V$, the expectation and the variance, respectively, taken with respect to the density $f$. 
\begin{theorem}\label{MSE}
Let $N=2^k$ for some positive integer $k$. Consider the sequence of partitions of $\bin$, $\{\Pi_k=\{[\tau]_{k,j}: \tau \in \{0,1\}^k, j \in \mathbb{N}\}\}$ with $\text{mesh}(\Pi_k) \to 0$. Then, for all $x \in [\tau]_{k,j}$ if $f$ is continuous at $x$, then as $N \to \infty$, 
\[
E\left([f_N(x)-f(x)]^2\right) \to 0
\]
if and only if $N \lambda([\tau]_{k,j}) \to \infty$.
\end{theorem}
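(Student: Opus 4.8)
The plan is to run the classical bias--variance decomposition of the mean-square error and to locate precisely where the quantity $N\lambda([\tau]_{k,j})$ enters. Because $x_1,\dots,x_N$ are independent with common law $\mu$, the indicators $\mathbb{I}_{x_i\in[\tau]_{k,j}}$ are i.i.d.\ Bernoulli with success probability $\mu([\tau]_{k,j})$, so that $N\,F_N([\tau]_{k,j})$ is $\mathrm{Binomial}\bigl(N,\mu([\tau]_{k,j})\bigr)$. Dividing the binomial mean and variance by $\lambda([\tau]_{k,j})$ and $\lambda([\tau]_{k,j})^2$ gives
\[
E\bigl(f_N(x)\bigr)=\frac{\mu([\tau]_{k,j})}{\lambda([\tau]_{k,j})},\qquad
V\bigl(f_N(x)\bigr)=\frac{\mu([\tau]_{k,j})\bigl(1-\mu([\tau]_{k,j})\bigr)}{N\,\lambda([\tau]_{k,j})^2},
\]
and the decomposition $E\bigl([f_N(x)-f(x)]^2\bigr)=V\bigl(f_N(x)\bigr)+\bigl(E(f_N(x))-f(x)\bigr)^2$ reduces the problem to analysing these two quantities.

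First I would dispose of the bias, \emph{unconditionally on the rate}. Since the element of $\Pi_k$ containing $x$ is exactly the cylinder $[x\upharpoonright_k]$, the ratio $\mu([\tau]_{k,j})/\lambda([\tau]_{k,j})$ is nothing but the $k$-th term of the limit defining $f$ in (\ref{RN}), so $E(f_N(x))\to f(x)$; alternatively, $\mu([\tau]_{k,j})=\int_{[\tau]_{k,j}}f\,d\lambda$ and Lemma \ref{LDT} with $g=f$ applies (using continuity of $f$ at $x$ and $\mathrm{mesh}(\Pi_k)\to0$), as already noted in the Remark above. Either way the bias vanishes regardless of how $N$ grows relative to $\lambda([\tau]_{k,j})$, so that $E\bigl([f_N(x)-f(x)]^2\bigr)\to0$ if and only if $V\bigl(f_N(x)\bigr)\to0$, and the whole theorem comes down to the variance.

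For the variance I would write
\[
V\bigl(f_N(x)\bigr)=\frac{1}{N\,\lambda([\tau]_{k,j})}\cdot\frac{\mu([\tau]_{k,j})}{\lambda([\tau]_{k,j})}\cdot\bigl(1-\mu([\tau]_{k,j})\bigr).
\]
The second factor tends to $f(x)$ by (\ref{RN}), and the third tends to $1$: since $\mu$ is a \emph{continuous} probability measure and the cylinders $[x\upharpoonright_k]$ decrease to the singleton $\{x\}$, we have $\mu([\tau]_{k,j})=\mu([x\upharpoonright_k])\to\mu(\{x\})=0$. Hence the product of the last two factors converges to $f(x)$, which by the standing hypothesis on $f$ lies in $(0,\infty)$. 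Consequently: if $N\lambda([\tau]_{k,j})\to\infty$, then $V(f_N(x))\to0\cdot f(x)=0$; and if $N\lambda([\tau]_{k,j})\not\to\infty$, choose a subsequence along which it is bounded by some $M<\infty$, along which $V(f_N(x))\ge M^{-1}\,\frac{\mu([\tau]_{k,j})}{\lambda([\tau]_{k,j})}\bigl(1-\mu([\tau]_{k,j})\bigr)\to f(x)/M>0$, so $V(f_N(x))\not\to0$. Combined with the previous paragraph, this is the asserted equivalence.

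The only genuinely delicate step is the ``only if'' direction of the variance analysis, which requires the factor $\frac{\mu([\tau]_{k,j})}{\lambda([\tau]_{k,j})}\bigl(1-\mu([\tau]_{k,j})\bigr)$ to stay bounded \emph{away from zero} for all large $k$; this is exactly where the positivity of $f(x)$ and the continuity (non-atomicity) of $\mu$ --- which guarantees $\mu([\tau]_{k,j})\to0$ and hence $1-\mu([\tau]_{k,j})\to1$ --- are indispensable, so this is the place in the argument where the standing assumption that $\mu$ be a continuous measure does essential work. It is also the structural reason the condition takes the unusual form $N\lambda([\tau]_{k,j})\to\infty$: with $N=2^k$ and $\lambda$ of Lebesgue (product) type one has $\lambda([\tau]_{k,j})=2^{-k}$, so $N\lambda([\tau]_{k,j})$ need not diverge at all, which is precisely the restriction on the sampling rate highlighted in the abstract.
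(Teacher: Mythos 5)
Your proposal is correct and follows essentially the same route as the paper's proof: a bias--variance decomposition in which the bias is killed unconditionally by Lemma \ref{LDT} (continuity of $f$ at $x$ plus $\text{mesh}(\Pi_k)\to 0$) and the equivalence is read off the binomial variance of $N F_N([\tau]_{k,j})$ scaled by $\lambda([\tau]_{k,j})$. If anything, your ``only if'' direction is tighter than the paper's, which displays only the upper bound $V(f_N(x))\leq E(f_N(x))/\bigl(N\lambda([\tau]_{k,j})\bigr)$ and then asserts the necessity of $N\lambda([\tau]_{k,j})\to\infty$; your exact factorization $V(f_N(x))=\frac{1}{N\lambda([\tau]_{k,j})}\cdot\frac{\mu([\tau]_{k,j})}{\lambda([\tau]_{k,j})}\bigl(1-\mu([\tau]_{k,j})\bigr)$ together with the bounded-subsequence argument (using $f(x)>0$ and $\mu([\tau]_{k,j})\to 0$ from continuity of $\mu$) supplies the lower bound on the variance that the paper leaves implicit.
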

\begin{proof}
We shall first prove necessity. First note that
\[
E\left([f_N(x)-f(x)]^2\right)=E^2\left(f_N(x)-f(x)\right)+V(f_N(x)).
\]
If $E\left([f_N(x)-f(x)]^2\right) \to 0$ then it is necessarily true that both bias and variance converge to $0$. In particular, upon examination of the variance term, we obtain
\begin{align*}
V(f_N(x))&=E(f^2_N(x))-E^2(f_N(x))\\
&=E\left[\left(\frac{F_N([\tau]_{k,j})}{\lambda([\tau]_{k,j})}\right)^2\right]-E^2\left[\frac{F_N([\tau]_{k,j})}{\lambda([\tau]_{k,j})}\right]\\
&=\frac{1}{N \lambda^2([\tau]_{k,j})}\int_{[\tau]_{k,j}}f(t)d\lambda(t)-\frac{1}{\lambda^2([\tau]_{k,j})}\left[\int_{[\tau]_{k,j}}f(t)d\lambda(t)\right]^2\\
&\leq\frac{E(f_N(x))}{N\lambda([\tau]_{k,j})}.
\end{align*}
Since the variance, $V(f_N(x)) \to 0$ for $x \in [\tau]_{k,j}$ and by Lemma \ref{LDT}, $E(f_N(x))\to f(x)$, we must have $N \lambda([\tau]_{k,j}) \to \infty$. We now prove sufficiency. Consider the expression for the variance:
\begin{align*}
V(f_N(x))&=\frac{E(f_N(x))}{N\lambda([\tau]_{k,j})}-\frac{E^2(f_N(x))}{N}\\
&\leq \frac{E(f_N(x))}{N\lambda([\tau]_{k,j})}.
\end{align*}
Now, if $\text{mesh}(\Pi_k) \to 0$ and $N \lambda([\tau]_{k,j}) \to \infty$, then $V(f_N(x)) \to 0$. We now turn our attention to the bias term and show that it converges to 0 as $N \to \infty$; indeed, then its square would too by a continuous mapping argument.  
\begin{align*}
E\left(f_N(x)-f(x)\right)&=E(f_N(x))-f(x))\\
&=\frac{1}{ \lambda([\tau]_{k,j})}\int_{[\tau]_{k,j}}f(t)d\lambda(t)-f(x)\\
& \to 0
\end{align*}
from Lemma \ref{LDT} since $\text{mesh}(\Pi_{k})\to 0$ as $N \to \infty$ . This concludes the proof. 
\end{proof}
Theorem \ref{MSE} informs us that one needs to increase sample size at an exponential rate in order to obtain a consistent histogram estimate. This represents a highly restrictive condition and is a direct artifact of the dominating product measure which directs the definition of the histogram estimate. Observe that as a consequence of Theorem \ref{MSE}, we can claim that $f_N(x)$ converges in probability to $f(x)$ for all $x \in \bin$ at which $f$ is continuous. It is also true that, owing to Lemma \ref{LDT},  $E(f_N(x))$ converges to $f(x)$ for $x$ at which $f$ is continuous. At this juncture, it is natural to query the rate at which these two convergences occur relative to one another. The following Theorem sheds light on the issue when $f$ is bounded. The proof follows along the lines of the proof of Lemma 2.3.2 in \cite{BLS}.
\begin{theorem}
Under the conditions of Theorem \ref{MSE} suppose, additionally, that $f$ is bounded on $\bin$. Then, 
\[
E\left[\left(f_N(x)-E(f_N(x))\right)^{2m}\right]=O\left(\left(N\lambda([\tau]_{k,j}) \right)^{-m}\right), \quad x \in [\tau]_{k,j},
\]
for any integer $m \geq 1$. 
\end{theorem}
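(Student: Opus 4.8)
\emph{Overview and reduction.} My plan is to write $N\,F_N([\tau]_{k,j})$ as a binomial count, reduce the claim to a $2m$-th moment bound for a centred sum of i.i.d.\ bounded variables, and then absorb the lower-order terms of that bound using the boundedness of $f$ together with the sampling regime $N\lambda([\tau]_{k,j})\to\infty$ identified in Theorem~\ref{MSE}. Concretely, set $p:=\mu([\tau]_{k,j})$ and $Y_i:=\mathbb{I}_{x_i\in[\tau]_{k,j}}-p$, $i=1,\dots,N$; these are i.i.d., centred, and $|Y_i|\le 1$, so $E(|Y_i|^{r})\le E(Y_i^{2})=p(1-p)\le p$ for every integer $r\ge 2$. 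Since $F_N([\tau]_{k,j})=p+\tfrac1N\sum_{i=1}^N Y_i$ and $E(f_N(x))=p/\lambda([\tau]_{k,j})$, we get
\[
f_N(x)-E(f_N(x))=\frac{1}{N\lambda([\tau]_{k,j})}\sum_{i=1}^N Y_i,\qquad
E\big[(f_N(x)-E(f_N(x)))^{2m}\big]=\frac{E\big[\big(\sum_{i=1}^N Y_i\big)^{2m}\big]}{\big(N\lambda([\tau]_{k,j})\big)^{2m}} ,
\]
so it suffices to prove $E\big[\big(\sum_{i=1}^N Y_i\big)^{2m}\big]=O\big((N\lambda([\tau]_{k,j}))^{m}\big)$.

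\emph{Moment bound for the centred sum.} I would obtain this by expanding the $2m$-th power and using independence and $E(Y_i)=0$: a multi-index $(i_1,\dots,i_{2m})$ contributes only if every value occurring in it is repeated at least twice, so grouping positions by their common index value produces a set partition of $\{1,\dots,2m\}$ into $\ell\le m$ blocks, each of size at least $2$. The number of assignments of distinct index values to $\ell$ blocks is at most $N^{\ell}$, and the associated term is a product of $\ell$ moments $E(Y_1^{r})$ with $r\ge 2$, hence at most $p^{\ell}$ in absolute value; summing over the finitely many set partitions of $\{1,\dots,2m\}$ into blocks of size $\ge 2$ gives
\[
E\Big[\Big(\sum_{i=1}^N Y_i\Big)^{2m}\Big]\;\le\;C_m\sum_{\ell=1}^{m}\big(Np\big)^{\ell}
\]
for a constant $C_m$ depending only on $m$. (This bound is standard---cf.\ Lemma~2.3.2 of \cite{BLS}---and also follows at once from Rosenthal's inequality, since here $E(|Y_1|^{2m})\le p$ and $N\,E(Y_1^{2})\le Np$.)

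\emph{Absorbing the lower-order terms.} This is where boundedness of $f$ is used: $p=\int_{[\tau]_{k,j}}f\,d\lambda\le\|f\|_{\infty}\,\lambda([\tau]_{k,j})$, hence $Np\le\|f\|_{\infty}\,N\lambda([\tau]_{k,j})$. For every $\ell\in\{1,\dots,m\}$ one has $(Np)^{\ell}\le 1+(Np)^{m}$, and since $N\lambda([\tau]_{k,j})\to\infty$ it follows that, for all sufficiently large $N$,
\[
\sum_{\ell=1}^{m}\big(Np\big)^{\ell}\;\le\; m\big(1+(Np)^{m}\big)\;\le\; m\big(1+\|f\|_{\infty}^{m}\big)\,\big(N\lambda([\tau]_{k,j})\big)^{m} .
\]
Substituting into the displays above yields $E\big[(f_N(x)-E(f_N(x)))^{2m}\big]\le C_m\,m\,(1+\|f\|_{\infty}^{m})\,\big(N\lambda([\tau]_{k,j})\big)^{-m}=O\big((N\lambda([\tau]_{k,j}))^{-m}\big)$, which is the assertion.

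\emph{Main obstacle.} The one delicate point is the last step. The expansion genuinely produces every power $(Np)^{\ell}$ with $1\le\ell\le m$, and when $Np$ is small it is the linear term, not $(Np)^{m}$, that dominates; so one must check that each such term is still $O\big((N\lambda([\tau]_{k,j}))^{m}\big)$. This is exactly where the boundedness of $f$ (to bound $p$ by a multiple of $\lambda([\tau]_{k,j})$) and the divergence $N\lambda([\tau]_{k,j})\to\infty$ both enter; without them the stated rate can fail. The combinatorial moment estimate of the second step, by contrast, is entirely routine.
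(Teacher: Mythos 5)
Your proof is correct and follows essentially the same route as the paper's: the same decomposition $f_N(x)-E(f_N(x))=\bigl(N\lambda([\tau]_{k,j})\bigr)^{-1}\sum_{i}\alpha_i$ with centred indicators, the same moment bound $E\bigl(|\alpha_i|^{r}\bigr)\le K\lambda([\tau]_{k,j})$ via boundedness of $f$, and the same combinatorial observation that only index patterns in which every index is repeated contribute, yielding $O\bigl((N\lambda([\tau]_{k,j}))^{m}\bigr)$ for the $2m$-th moment of the sum. The only difference is that you make explicit the absorption of the lower-order terms $(Np)^{\ell}$, $\ell<m$, using $N\lambda([\tau]_{k,j})\to\infty$, a step the paper leaves implicit in the remark that ``the main contribution is from the cross-product terms.''
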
 
\begin{proof}
Note that
\[
f_N(x)-E(f_N(x))=\frac{1}{N\lambda([\tau]_{k,j})} \left[\alpha_1+\cdots+\alpha_N\right],
\]
where $$\alpha_i=\mathbb{I}_{x_i \in [\tau]_{k,j}}-\int_{[\tau]_{k,j}}f(t)d\lambda(t), \quad i=1,\ldots,N.$$
For any integer $s \geq 1$, 
\begin{align*}
E(\alpha_i^s) &\leq E\left[\left(\mathbb{I}_{x_i \in [\tau]_{k,j}}\right)^s\right]\\
&=E\left[\mathbb{I}_{x_i \in [\tau]_{k,j}}\right]\\
&=\int_{[\tau]_{k,j}}f(t)d\lambda(t)\\
&\leq K \lambda([\tau]_{k,j})
\end{align*}
where $K$ is the bound on $f$. This implies that 
\[
E(\alpha_1+\cdots+\alpha_N)=O\left( \lambda([\tau]_{k,j})\right)
\]
and the main contribution to $E\left[\left(\alpha_1+\cdots+\alpha_N\right)^{2m}\right]$ is from the cross-product terms. Therefore,
\begin{align*}
E\left[\left(f_N(x)-E(f_N(x))\right)^{2m}\right]&=\left(N\lambda([\tau]_{k,j})\right)^{-2m}E\left[\left(\alpha_1+\cdots+\alpha_N\right)^{2m}\right]\\
&=O\left(\left(N\lambda([\tau]_{k,j}) \right)^{-m}\right).
\end{align*}
\end{proof}
\section{DISCUSSION}
Assumptions of independence or perhaps Markov dependence are usually imposed on the elements of the observed binary sequences before estimating densities. What is noteworthy in our setup is the absence of any such structural assumption---we do not assume that the infinite coin tosses which generate a sequence are independent, exhibit some sort of Markov dependence or satisfy any type of mixing condition. The histogram is, in that regard, an apt estimator. Several modifications are necessary for the application of the framework and the results provided in this article to statistical problems involving sequences of binary observations; the obvious one is regarding the compatibility of finiteness of the sequences observed in practice to nature of the theory presented here regarding infinite binary sequences. However, it is to be noted that the partition $\Pi_k$ of $\bin$ where the sample size $N=2^k$ is constructed from binary sequences of length $k$. For example, if $k=3$, then the partition $\Pi_3$ of $\bin$ is
\[
\Pi_3=\bigg\{[000],[001],[010],[011],[100],[101],[110],[111]\bigg\},
\]
where, for example, $[000]$ is the set of all binary sequences which extend $000$. Indeed, any arbitrary finite binary sequence $\tau$ will belong to one of the sets in $\Pi_3$ and an estimate of the density at $\tau$ can, in principle, be obtained upon fixing a dominating measure. An important area of potential application of the proposed estimator is in the study of algorithmic randomness. Null sets of $\bin$ play a crucial role in this field as they form the basis for tests which determine if a binary sequence is random---see Martin L\"{o}f tests in \cite{DH} for more details. It is known that null sets of $\bin$ are points where a measure absolutely continuous with respect to the Lebesgue measure on $\bin$ has infinite density. It is conceivable that our estimator, based on $N$ realizations of the random binary sequence in question, can be used to check for null sets. 

A modern interesting application is one concerning tree-structured data. In a recent article, \cite{balding} formulated a technique by which data in the form of rooted trees were coded as binary sequences via a bijective mapping; they then defined a probability space from which a random sample of such trees were drawn from and proceeded to prove limit theorems. The interesting aspect of their construction was the fact that each node of the tree could have only have a finite number of children but the tree could, in principle, have unbounded number of generations, viz. infinite number of nodes. \cite{busch}, using the construction and results from \cite{balding}, developed a goodness-of-fit test for tree-structured data arising from a protein classification problem. Indeed, the trees in the dataset considered consisted of finitely many nodes and in order to use results from \cite{balding}, they assumed that the probability measure of interest assigned positive mass to only trees with finite number of nodes. Another interesting application concerns phylogenetics: while modeling phylogenetic trees using Bayesian techniques one typically draws an MCMC sample of trees from a posterior distribution; several summary statistics are considered at this point amongst which the consensus tree is popular. It would be interesting to investigate if the construction in \cite{balding} can be employed in conjunction with our estimator to develop statistical procedures and summary statistics on the space of binary sequences via bijective mappings from the tree space. More recently, \cite{RSLW} considered the problem of estimating the density of $d$-dimensional Bernoulli vectors and developed an estimator which attains near-minimax MSE in the case where $N <2^d$. Indeed, their work is primarily concerned with the scenario where all the observed binary sequences are of the same length $d$.  

Another area where lengthy binary sequences play a prominent role is genomic studies wherein a binary sequence $\tau_i$ could provide a biochemical profile of a bacterial strain where every component is a ``yes-no" indicator of a particular biochemical marker. If the lengths of $\tau_i$ are all the same, then existing methods for density estimation can be used. However, it is not unreasonable to consider the situation when the $\tau_i$ are of differing lengths corresponding to differently measures profiles.  Applications in social sciences are rife with binary sequences corresponding to ``yes-no" questions in questionnaires, record of event or non-events etc; likewise, data in the form of binary sequences is common in several computer science applications as answers to database queries etc. Despite recent advances in functional data techniques assuming observed trajectories of functions, questions regarding functional data in the form of observed binary process have received scant attention. There have been a few attempts based on viewing the observed binary sequence from a parametric perspective assuming exponential family of distributions generated via a link function (see \cite{AVD}). The choice of the link function, as one might imagine, would play a critical role in any ensuing methodology.  In that regard, the framework presented here can be viewed as a first step towards a non-parametric setting, but perhaps exploratory in spirit. In all these applications, when the observed sequences are of varying lengths, one is unsure concerning the circumscription of the sequences to equal lengths based on a scientific reason; any choice would be arbitrary and would severely impact subsequent inference. Noting the construction of the partition based on finite sequences, our estimator may well come in handy in such settings. It would be interesting to examine the utility of the proposed estimator under such settings; much work remains to be done in this direction. 

From a theoretical perspective, however, the Cantor space has been a rich source of problems in theory of algorithmic randomness, geometric measure theory etc. But its use as a candidate modeling space in statistical literature has been minimal. The conduciveness of the space in defining various family of probability measures, in principle, renders it attractive as a modeling space for binary data; we feel more investigation in this direction might prove to be fruitful. One possible direction for extending the results presented here lies in the exploitation of the fractal nature of the Cantor space via Hausdorff measures as the dominating measures. Additionally, connections with topological entropy of the binary sequences and Hausdorff dimensions established for the Cantor space in \cite{bill2} are also deserving of further consideration. 
\section{ACKNOWLEDGEMENTS}
The author is grateful to Prem Goel and Sebastien Kurtek for helpful discussions and particularly thanks Steve Maceachern for suggesting this problem in the first place. He also thanks a referee for a careful reading of the paper and useful suggestions which improved the overall exposition.

\bibliography{ref_cantor}

\begin{thebibliography}{16}
\providecommand{\natexlab}[1]{#1}
\providecommand{\url}[1]{\texttt{#1}}
\expandafter\ifx\csname urlstyle\endcsname\relax
  \providecommand{\doi}[1]{doi: #1}\else
  \providecommand{\doi}{doi: \begingroup \urlstyle{rm}\Url}\fi

\bibitem[Balding et~al.(2009)Balding, Ferrari, Fraiman, and Sued]{balding}
D~Balding, P~A Ferrari, R~Fraiman, and M~Sued.
\newblock {Limit Theorems for Sequences of Random Trees}.
\newblock \emph{{Test}}, 18:\penalty0 302--315, 2009.

\bibitem[Billingsley(1960)]{bill2}
P~Billingsley.
\newblock {Hausdorff Dimension in Probability Theory}.
\newblock \emph{{Illinois Journal of Mathematics}}, 4:\penalty0 187--209, 1960.

\bibitem[Billingsley(1995)]{bill}
P~Billingsley.
\newblock \emph{{Probability and Measure, 3rd edition}}.
\newblock John Wiley and Sons, New York, 1995.

\bibitem[Busch et~al.(2009)Busch, Ferrari, Flesia, Fraiman, Grynberg, and
  Leonardi]{busch}
J~R Busch, P~A Ferrari, A~G Flesia, R~Fraiman, S~P Grynberg, and F~Leonardi.
\newblock {Testing Statistical Hypothesis on Random Trees and Applications to
  the Protein Classification Problem}.
\newblock \emph{{Annals of Applied Statistics}}, 3:\penalty0 542--563, 2009.

\bibitem[Dabo-Niang(2003)]{SDN}
S~Dabo-Niang.
\newblock {Density Estimation in a Separable Metric Space}.
\newblock \emph{{Pub. Inst. Stat. Univ. Paris}}, pages 3--21, 2003.

\bibitem[Deheuvels(1974)]{Deh}
P~Deheuvels.
\newblock {Conditions n\'{e}cessaires et suffisantes de convergence preque
  s\^{u}re des estimateurs de la densit\'{e}}.
\newblock \emph{{C.R. Acad. Sci. Paris A}}, 278:\penalty0 1217--1270, 1974.

\bibitem[Downey and Hirschfield(2010)]{DH}
R~G Downey and D~R Hirschfield.
\newblock \emph{{Algorithmic Randomness and Complexity}}.
\newblock Springer, New York, 2010.

\bibitem[Navarro-Bermu\'{u}dez(1979)]{BD2}
F~J Navarro-Bermu\'{u}dez.
\newblock {Topologically Equivalent Measures in the Cantor Space}.
\newblock \emph{{Proceedings of the American Mathematical Society}},
  77:\penalty0 229--236, 1979.

\bibitem[Navarro-Bermu\'{u}dez and Oxtoby(1988)]{BD1}
F~J Navarro-Bermu\'{u}dez and J~C Oxtoby.
\newblock {Four Topologically Equivalent Measures in the Cantor Space}.
\newblock \emph{{Proceedings of the American Mathematical Society}},
  104:\penalty0 859--860, 1988.

\bibitem[Nishiura(2008)]{nishiura}
T~Nishiura.
\newblock \emph{{Absolute Measurable Spaces}}.
\newblock Cambridge University Press, 2008.

\bibitem[Oliveira(2000)]{Oli}
P~E Oliveira.
\newblock {Mean Square Error for Histograms When Estimating Radon-Nikodym
  Derivatives}.
\newblock \emph{{Portugaliae Mathematica}}, 57:\penalty0 1--16, 2000.

\bibitem[Raginsky et~al.(2013)Raginsky, Silva, Lazebnik, and Willett]{RSLW}
M~Raginsky, J~Silva, S~Lazebnik, and R~Willett.
\newblock {A recursive procedure for density estimation on the binary
  hypercube}.
\newblock \emph{{Electronic Journal of Statistics}}, 7:\penalty0 820--858,
  2013.

\bibitem[Rao(1983)]{BLS}
B~L S~Prakasa Rao.
\newblock \emph{{Nonparametric Functional Estimation}}.
\newblock Academic Press, Orlando, Florida, 1983.

\bibitem[Reimann(2004)]{JR}
J~Reimann.
\newblock \emph{{Computability and Fractal Dimension}}.
\newblock Ph.D. Dissertation, Universit\"{a}t Heidelberg, 2004.

\bibitem[Shields(1973)]{PS}
P~Shields.
\newblock \emph{{The Theory of Bernoulli Shifts}}.
\newblock The University of Chicago Press, Chicago, 1973.

\bibitem[van~der Linde(2009)]{AVD}
A~van~der Linde.
\newblock {A Bayesian latent variable approach to functional principal
  components analysis with binary and count data}.
\newblock \emph{{Advances in Statistical Analysis}}, 93:\penalty0 307--333,
  2009.

\end{thebibliography}
\bibliographystyle{plainnat}

\end{document}